\providecommand{\U}[1]{\protect\rule{.1in}{.1in}}
\numberwithin{equation}{section}	
\theoremstyle{plain}
\newtheorem{theorem}{Theorem}[section]
\newtheorem{lemma}[theorem]{Lemma}
\newtheorem{corollary}[theorem]{Corollary}
\theoremstyle{definition}
\newtheorem{remark}[theorem]{Remark}
\def\protectbold#1{\protect{\boldmath{$#1$}}}
\def\bigO{{\cal O}}
\def\dsp#1{\displaystyle#1}
\def\protectbold#1{\protect{\boldmath{$#1$}}}
\begin{document}

\title{Asymptotic expansions of Kummer hypergeometric functions with three asymptotic
parameters $a$, $b$ and $z$}
\author{N. M. Temme\thanks{IAA, 1825 BD 25, Alkmaar, The Netherlands. Former address:
Centrum Wiskunde \& Informatica (CWI), Science Park 123, 1098 XG Amsterdam,
The Netherlands. Email: nico.temme@cwi.nl} \quad\quad E. J. M.
Veling\thanks{Noord-Houdringelaan 22, 3722 BR Bilthoven, The Netherlands.
Former address: Delft University of Technology (TUDelft), Faculty of Civil
Engineering and Geosciences, Water Resources Section, Delft, The Netherlands.
Email: ed.veling@xs4all.nl} }
\maketitle

\begin{abstract}
\noindent In a recent paper \cite{Temme:2021:AKH} new asymptotic expansions
are given for the Kummer functions $M(a,b,z)$ and $U(a,b+1,z)$ for large
positive values of $a$ and $b$, with $z$ fixed and special attention for the
case $a\sim b$. In this paper we extend the approach and also accept large
values of $z$. The new expansions are valid when at least one of the parameters $a$, $b$, or $z$ is large.
We provide numerical tables to show the performance of the expansions.
\end{abstract}


\date{\ }

{\small \noindent\textbf{Keywords} Asymptotic expansions; Kummer functions;
Confluent hypergeometric functions\newline\textbf{AMS Classification} Primary
41A60; Secondary 33C15}

\section{Introduction}

\label{sec:intro}

We derive new asymptotic expansions of the Kummer functions $M(a,b,z)$ and
$U(a,b+1,z)$ in which all three positive parameters $a$, $b$, and $z$ are
allowed to be large, and they are even valid when at least one of the parameters $a$, $b$, or $z$ is large. 
The methods of the recent paper \cite{Temme:2021:AKH}
require only a minor modification to include the argument $z$ as a large
parameter. Again we use a uniform method to derive the asymptotic expansion of
a Laplace-type integral of the form
\begin{equation}
F_{\lambda}(z)=\frac{1}{\Gamma(\lambda)}\int_{0}^{\infty}s^{\lambda-1}%
e^{-zs}f(s)\,ds, \label{eq:int01}%
\end{equation}
with $z$ as a large positive parameter. If the parameter $\lambda>0$ is fixed
we can use Watson's lemma. However, when $\lambda$ is allowed to become large,
there is a positive saddle point, and the asymptotic approach can be based on
Laplace's method. The uniformity aspect is that we combine both methods in one approach.

The method can also be used for loop integrals of the form
\begin{equation}
\label{eq:int02}G_{\lambda}(z)=\frac{\Gamma(\lambda+1)}{2\pi i}\int_{-\infty
}^{(0+)} s^{-\lambda-1}e^{zs}g(s)\,ds,
\end{equation}
where the contour runs from $-\infty$ with $\mathrm{ph}\,s=-\pi$, encircles
the origin in anti-clockwise direction, and returns to $-\infty$ with
$\mathrm{ph}\,s=\pi$. The negative axis is a branch cut and we assume that
$s^{-\lambda-1}$ has real values for $s>0$ (when $\lambda$ is real).

We show in the next section that $G_{\lambda}(z)$ can be interpreted as an
analytic continuation with respect to $\lambda$ of $F_{\lambda}(z)$ (after changing a notation), and in
this way we can reduce the number of four expansions needed in
\cite{Temme:2021:AKH} to only two.

The asymptotic analysis of Kummer functions (or confluent hypergeometric
functions) has been discussed in great detail in the literature. A simple
result is available for $M(a,b,z)$ when $b\to\infty$, with $a=\mathcal{O}(1)$
and $z=\mathcal{O}(1)$, because in that case the defining convergent power
series has an asymptotic character. In \cite[Chapter~10]{Temme:2015:AMI}
several results for $M(a,b,z)$ and $U(a,b,z)$ are derived for large $a$ or
$b$, also in combination with large $z$. The classical results on large $z$
expansions are considered in \cite{Olver:1997:ASF} and \cite{Slater:1960:CHF},
and summarised in \cite{Olde:2010:CHF}, where we can also find expansions for
large parameters. See also \cite{Dunster:1989:UAE} and \cite{Olver:1980:UAE},
where the results are derived for the Whittaker functions. In the notation of
the Whittaker functions $M_{\kappa,\mu}(z)$ and $W_{\kappa,\mu}(z)$, the
uniformity aspects considered in the present paper are similar to those with
large $z$, $\kappa$ and $\mu$, paying special attention to the case
$\kappa\sim-\mu$, $\mu>0$. In a recent paper \cite{Dunster:2021:UAE} expansions are given for the Whittaker functions
for large values of $\mu$, which are uniformly valid for $0\le \kappa/\mu\le 1-\delta$ and $0\le {\rm arg}(z)\le\pi$.
Large values of $\mu$ correspond to large $a$ and $b$ such that $b\sim 2a$.

\section{The asymptotic method}

\label{sec:method} We summarise the main steps in the construction of the
asymptotic expansions. For details we refer to the Appendix and  \cite[Chapter~25]%
{Temme:2015:AMI}.

The Kummer functions can be written as integrals of the form
\begin{equation}
\int_{0}^{c}e^{-z\phi(t)}\Phi(t)\,dt, \label{eq:met01}%
\end{equation}
where $c=1$ for the $M$-function and $c=\infty$ for the $U-$function. The
functions $\Phi(t)$ and $\phi(t)$ will be described in a later section, see \eqref{eq:Mbga04}.
The function $\phi(t)$ has one saddle point $t_{0}$ (a zero of $\phi^{\prime}(t)$) in $(0,c)$, and
$t_{0}$ moves to the origin under the influence of an extra parameter. A
typical example is the function
\begin{equation}
\psi(s)=s-\mu\ln s,\quad\psi^{\prime}(s)=\frac{s-\mu}{s}, \label{eq:met02}%
\end{equation}
where the saddle point $s_{0}=\mu$ tends to zero when $\mu$ does. At that same
time $\psi(s)\rightarrow s$, and the saddle point vanishes. In
\cite[Chapter~25]{Temme:2015:AMI} this asymptotic feature has been called
\emph{The vanishing saddle point}. The asymptotic method is based on the
transformation
\begin{equation}
\phi(t)-\phi(t_{0})=\psi(s)-\psi(s_{0}),\quad\psi(s)=s-\mu\ln s,
\label{eq:met03}%
\end{equation}
with condition $\mathrm{sign}(s-s_{0})=\mathrm{sign}(t-t_{0})$.

In the cases covered in this paper,
the  integral as in \eqref{eq:met01} will become a
Laplace-type integral as in \eqref{eq:int01}:
\begin{equation}
e^{-z(\phi(t_{0})-\psi(s_{0}))}\int_{0}^{\infty}s^{\lambda-1}e^{-zs}%
f(s)\,ds,\quad\lambda=\mu z,\quad f(s)=s\Phi(t)\frac{dt}{ds}.
\label{eq:met04}%
\end{equation}

Using an integration by parts scheme we can obtain the asymptotic expansion of
\eqref{eq:int01}
\begin{equation}
\label{eq:met05}F_{\lambda}(z)\sim z^{-\lambda}\sum_{n=0}^{\infty}\frac
{f_{n}(\mu)}{z^{n}},\quad z\to\infty,\quad\lambda\ge0.
\end{equation}
In the Appendix we explain how the coefficients $f_{n}(\mu)$ can be obtained.
In the same way we can find the expansion of the contour integral in
\eqref{eq:int02}
\begin{equation}
\label{eq:met06}G_{\lambda}(z)\sim z^{\lambda}\sum_{n=0}^{\infty}(-1)^{n}%
\frac{g_{n}(\mu)}{z^{n}},\quad z\to\infty, \quad\lambda\ge0.
\end{equation}

In the previous article \cite{Temme:2021:AKH} we considered the cases $a\le b$
and $a\ge b$ for each Kummer function, resulting in four expansions. Here we
combine the method for $a\le b$ and $a\ge b$ by exploiting the strong
relationship between the functions $F_{\lambda}(z)$ and $G_{\lambda}(z)$,
because $G_{\lambda}(z)$ can be seen as the analytic continuation with respect to $\lambda$ of
$F_{\lambda}(z)$, which becomes defined for $\Re\lambda\le0$.

To show this, we first observe that the integral in \eqref{eq:int02} exists
for all finite complex values of $\lambda\ne-1,-2,,3,\ldots$. For a start, let
$\Re\lambda<0$. Then we can use $(-\infty,0]$ as the path of integration and
obtain
\begin{equation}
\label{eq:met07}%
\begin{array}
[c]{@{}r@{\;}c@{\;}l@{}}%
G_{\lambda}(z) & = & \displaystyle \frac{\Gamma(\lambda+1)}{2\pi i}\left(
e^{-\pi i(-\lambda-1)}-e^{\pi i(-\lambda-1)}\right)  \int_{-\infty}^{0} \vert
s\vert^{-\lambda-1}e^{zs} g(s)\,ds\\
& = & \displaystyle \frac{1}{\Gamma(-\lambda)}\int_{0}^{\infty}s^{-\lambda
-1}e^{-zs} g(-s)\,ds.
\end{array}
\end{equation}
This can be used for $\lambda= -1,-2,,3,\ldots$ and it becomes $F_{\lambda
}(z)$ when we replace $\lambda$ by $-\lambda$ and $g(-s)$ by $f(s)$. We
conclude this as follows.

\begin{lemma}
\label{lem:lem01} The function $G_{\lambda}(z)$ defined in \eqref{eq:int02} is
an analytic function for all complex values of $\lambda$ and is, with
$g(s)=f(-s)$, the analytic continuation with respect to $\lambda$  of $F_{\lambda}(z)$, which is
initially defined for $\Re\lambda>0$.
\end{lemma}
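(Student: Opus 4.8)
The plan is to prove the two assertions separately: first that $G_{\lambda}(z)$ is an entire function of $\lambda$, and then that, after the substitution $g(s)=f(-s)$ indicated in the statement, it reproduces $F_{\lambda}(z)$ on the half-plane where the latter is defined, so that uniqueness of analytic continuation finishes the argument. The computation already displayed in \eqref{eq:met07} does the bulk of the identification work on $\Re\lambda<0$; what genuinely remains is to justify analyticity on the \emph{whole} $\lambda$-plane, and in particular across the points $\lambda=-1,-2,-3,\ldots$ at which the prefactor $\Gamma(\lambda+1)$ in \eqref{eq:int02} is singular.

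For the analyticity I would write $G_{\lambda}(z)=\Gamma(\lambda+1)\,H_{\lambda}(z)$, where $H_{\lambda}(z)=\frac{1}{2\pi i}\int_{-\infty}^{(0+)}s^{-\lambda-1}e^{zs}g(s)\,ds$ is the bare Hankel integral, and argue that $H_{\lambda}(z)$ is entire in $\lambda$. The contour keeps a fixed positive distance from the origin on its small circle, so the only convergence issue is at $s\to-\infty$, where the factor $e^{zs}$ decays exponentially (with $z>0$) and dominates both the polynomial factor $s^{-\lambda-1}$ and the assumed mild growth of $g$ near the negative axis; the integral therefore converges locally uniformly in $\lambda$, and differentiation under the integral sign (or Morera's theorem) yields entirety of $H_{\lambda}$. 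The poles of $\Gamma(\lambda+1)$ must then be cancelled: at $\lambda=-n$ with $n\in\{1,2,3,\ldots\}$ the exponent $-\lambda-1=n-1$ is a non-negative integer, so $s^{-\lambda-1}e^{zs}g(s)$ is single-valued and analytic at the origin, the two banks of the Hankel contour carry equal values with opposite orientation and cancel, the small circle contributes nothing, and hence $H_{-n}(z)=0$. These zeros of $H_{\lambda}$ annihilate the poles of $\Gamma(\lambda+1)$, so $G_{\lambda}(z)$ extends analytically across every $\lambda=-n$; together with the harmless point $\lambda=0$ (where $\Gamma(1)=1$ and the residue at the origin gives $G_{0}(z)=g(0)$) this shows $G_{\lambda}(z)$ is entire.

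For the continuation statement I would restrict to $\Re\lambda<0$ and collapse the Hankel contour onto the two sides of the cut $(-\infty,0]$: the small circle of radius $\varepsilon$ contributes $\bigO(\varepsilon^{-\Re\lambda})\to0$ precisely when $\Re\lambda<0$, which is exactly the regime in which \eqref{eq:met07} is derived. Substituting $s\mapsto-s$ and simplifying the phase factor, $e^{-\pi i(-\lambda-1)}-e^{\pi i(-\lambda-1)}=-2i\sin\pi\lambda$, together with the reflection relation $\Gamma(\lambda+1)\Gamma(-\lambda)=-\pi/\sin\pi\lambda$, collapses the prefactor to $1/\Gamma(-\lambda)$ and reproduces the second line of \eqref{eq:met07}. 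Setting $g(s)=f(-s)$, that is $g(-s)=f(s)$, and then replacing $\lambda$ by $-\lambda$ converts the right-hand side into $\frac{1}{\Gamma(\lambda)}\int_{0}^{\infty}s^{\lambda-1}e^{-zs}f(s)\,ds=F_{\lambda}(z)$, exactly as anticipated in the text preceding the statement. Thus the entire function obtained from $G_{\lambda}(z)$ by this relabeling coincides with $F_{\lambda}(z)$ throughout the open half-plane $\Re\lambda>0$, and the identity theorem forces it to be the unique analytic continuation of $F_{\lambda}(z)$.

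The main obstacle is the step that makes the lemma nontrivial, namely showing that $H_{\lambda}(z)$ vanishes at each negative integer so as to cancel the poles of $\Gamma(\lambda+1)$; this rests on $g$ being analytic in a full neighborhood of $(-\infty,0]$ and growing slowly enough there for the contour integral to converge for every $\lambda$. The remaining ingredients — the phase bookkeeping, the use of the reflection formula, and the vanishing of the small circle for $\Re\lambda<0$ — are routine and are in effect already carried out in the passage leading to \eqref{eq:met07}.
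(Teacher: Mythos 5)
Your proof is correct and follows essentially the same route as the paper: the identification with $F_{\lambda}(z)$ rests on the collapsed-contour computation \eqref{eq:met07} together with the reflection formula and the substitution $\lambda\mapsto-\lambda$, $g(-s)=f(s)$, which is exactly what the text preceding the lemma does. The paper's own argument is just that computation stated tersely; your additional steps (entirety of the bare Hankel integral, its vanishing at $\lambda=-n$ to cancel the poles of $\Gamma(\lambda+1)$ --- the paper instead notes that the real-line representation in the second line of \eqref{eq:met07} is already regular there --- and the appeal to the identity theorem) supply details the paper leaves implicit, and they are all sound.
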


In the following sections we use this lemma when an asymptotic expansion
derived for $b\ge a$ will also be used for $b\le a$. In this way we reduce the
four different methods used in \cite{Temme:2021:AKH} to two approaches.

In this paper we use the positive argument $z$ of the Kummer functions as the
principal asymptotic parameter, in the asymptotic analysis we scale the
parameters $a$ and $b$ with respect to $z$ by using
\begin{equation}
\alpha=\frac{a}{z},\quad\beta=\frac{b}{z}. \label{eq:met08}%
\end{equation}

Special values of the Kummer functions are
\begin{equation}
\label{eq:met09}M(a,a,z)=e^{z},\quad U(a,a+1,z)=z^{-a},
\end{equation}
and our asymptotic expansions reduce smoothly to these elementary values as
$b\to a$. We prefer giving expansions of $U(a,b+1,z)$ for this reason, and not
of $U(a,b,z)$, which becomes the incomplete gamma function $e^{z}%
\Gamma(1-a,z)$ as $b\to a$. For more details on the Kummer functions we refer
to \cite{Olde:2010:CHF}.

In the following two sections the asymptotic expansions have a front term of
the form $e^{z \mathcal{A}}$, where $\mathcal{A}$ can be written in terms of
\begin{equation}
\label{eq:met10}A(\mu)=\mu\left(  \tau-\ln\tau-1\right)  -\alpha\ln(1-\mu
\tau),
\end{equation}
where $\mu=\pm(\alpha-\beta)$, $\tau=t_{0}/\mu$, and $t_{0}$ is the relevant
saddle point.

\section{The expansion of {\boldmath{$M(a,b,z)$}}}

\label{sec:Mbgea} We start with $b\geq a$ and use the notation
\begin{equation}
\lambda=b-a,\quad\mu=\frac{\lambda}{z}=\beta-\alpha=\frac{b-a}{z}.
\label{eq:Mbga01}%
\end{equation}
The Kummer relation $M(a,b,z)=e^{z}M(b-a,b,-z)$ together with the integral
\begin{equation}
M(a,b,z)=\frac{\Gamma(b)}{\Gamma(a)\Gamma(b-a)}\int_{0}^{1}e^{zt}%
t^{a-1}(1-t)^{b-a-1}\,dt,\quad \Re a >0,\quad  \Re(b-a)>0, \label{eq:Mbga02}%
\end{equation}
gives
\begin{equation}%
\begin{array}
[c]{@{}r@{\;}c@{\;}l}%
M(a,b,z) & = & \displaystyle\frac{\Gamma(b)e^{z}}{\Gamma(a)\Gamma(b-a)}%
\int_{0}^{1}e^{-zt}t^{b-a-1}(1-t)^{a-1}\,dt\\[8pt]
& = & \displaystyle\frac{\Gamma(b)e^{z}}{\Gamma(a)\Gamma(\lambda)}\int_{0}%
^{1}e^{-z\phi(t)}\,\frac{dt}{t(1-t)},\label{eq:Mbga03}%
\end{array}
\end{equation}
where
\begin{equation}
\phi(t)=t-\alpha\ln(1-t)-\mu\ln t,\quad\phi^{\prime}(t)=-\frac{t^{2}%
-(\beta+1)t+\mu}{t(1-t)}. \label{eq:Mbga04}%
\end{equation}
Note that in this case $\dsp{\Phi(t)=\frac{1}{t(1-t)}}$, the  function shown in \eqref{eq:met01}. 

The saddle point $t_{0}$ inside the interval $(0,1)$ is given by
\begin{equation}
\begin{array}
[c]{@{}r@{\;}c@{\;}l}%
t_{0}&=&\dsp{\tfrac{1}{2}(\beta+1)-\tfrac{1}{2}\sqrt{(\beta+1)^{2}-4\mu}}\\[8pt]
&=&\dsp{\frac{2\mu
}{\beta+1+\sqrt{(\beta+1)^{2}-4\mu}}
=\frac{2\mu
}{\beta+1+\sqrt{(\beta-1)^{2}+4\alpha}},}
\end{array}
 \label{eq:Mbga05}%
\end{equation}
with expansion
\begin{equation}
t_{0}=\frac{\mu}{\beta+1}+\frac{\mu^{2}}{(\beta+1)^{3}}+\mathcal{O}\left(
\mu^{3}\right)  ,\quad\mu\rightarrow0. \label{eq:Mbga06}%
\end{equation}
From this, and from $\phi^\prime(t)$ given in \eqref{eq:Mbga04}, we see that the
saddle point $t_{0}$ vanishes, as $\mu\rightarrow0$.

We use the transformation given in \eqref{eq:met03} and obtain
\begin{equation}
M(a,b,z)=\frac{\Gamma(b)}{\Gamma(a)}e^{z-z\mathcal{A}}F_{\lambda}(z),\quad
F_{\lambda}(z)=\frac{1}{\Gamma(\lambda)}\int_{0}^{\infty}s^{\lambda-1}%
e^{-zs}f(s)\,ds, \label{eq:Mbga07}%
\end{equation}
where
\begin{equation}
\mathcal{A}=\phi(t_{0})-\psi(s_{0}),\quad f(s)=\frac{s}{t(1-t)}\frac{dt}%
{ds},\quad\frac{dt}{ds}=\frac{\psi^{\prime}(s)}{\phi^{\prime}(t)},\quad f(s)=\frac{s-\mu}{(\beta+1)t-t^2-\mu}.
\label{eq:Mbga08}%
\end{equation}
As in \eqref{eq:met05}, with details in the Appendix, we can obtain the expansion
\begin{equation}
M(a,b,z)\sim\frac{\Gamma(b)}{\Gamma(a)}e^{z-z\mathcal{A}}z^{-\lambda}%
\sum_{n=0}^{\infty}\frac{f_{n}(\mu)}{z^{n}},\quad z\rightarrow\infty.
\label{eq:Mbga09}%
\end{equation}

To find $f_{0}(\mu)$ we need the derivative $dt/ds$ at $s=s_{0}=\mu$. Using
l'H{\^{o}}pital's rule we have
\begin{equation}
\left.  \frac{dt}{ds}\right\vert _{s=s_{0}}=\frac{\psi^{\prime\prime}(s_{0}%
)}{\phi^{\prime\prime}(t_{0})\left.  \frac{dt}{ds}\right\vert _{s=s_{0}}%
},\quad\psi^{\prime\prime}(s_{0})=\frac{1}{\mu},\quad\phi^{\prime\prime}%
(t_{0})=\frac{\beta t_{0}^{2}-2\mu t_{0}+\mu}{t_{0}^{2}(1-t_{0})^{2}}.
\label{eq:Mbga10}%
\end{equation}
This gives
\begin{equation}
\left(  \left.  \frac{dt}{ds}\right\vert _{s=s_{0}}\right)  ^{2}=\frac
{\psi^{\prime\prime}(s_{0})}{\phi^{\prime\prime}(t_{0})}\quad\Longrightarrow
\quad f_{0}(\mu)=\sqrt{\frac{\mu}{\beta t_{0}^{2}-2\mu t_{0}+\mu}}.
\label{eq:Mbga11}%
\end{equation}

We normalize the coefficients of the expansion by writing
\begin{equation}
\label{eq:Mbga12}M(a,b,z)\sim e^{z-z\mathcal{A}}\frac{\Gamma(b)}{\Gamma
(a)}z^{a-b}f_{0}(\mu)\sum_{n=0}^{\infty}\frac{\widetilde{f}_{n}(\mu)}{z^{n}},
\quad\widetilde{f}_{n}(\mu)=\frac{f_{n}(\mu)}{f_{0}(\mu)},
\end{equation}
as $z\to\infty$ and $b\ge a$.

In applications and numerical testing it may be convenient to use a scaled
function. We write
\begin{equation}
\label{eq:Mbga13}M(a,b,z)=e^{z}\frac{\Gamma(b)}{\Gamma(a)}z^{a-b}%
{\widetilde{M}}(a,b,z),
\end{equation}
 and we summarise the above results in the following theorem.

\begin{theorem}\label{theo:theo01}
The scaled Kummer function defined in \eqref{eq:Mbga13}
has the asymptotic expansion 
\begin{equation}
\label{eq:Mbga14}{\widetilde{M}}(a,b,z) \sim e^{-z\mathcal{A}} f_{0}(\mu)
\sum_{n=0}^{\infty}\frac{\widetilde{f}_{n}(\mu)}{z^{n}}, \quad z\to \infty,
\end{equation}
uniformly with respect to $ b\ge a\ge a_0$, where $a_0$ is a fixed positive parameter,  $\mu$ is defined in \eqref{eq:Mbga01},  $\mathcal{A}=A(\mu)$ is defined in  
\eqref{eq:met10}, $f_0(\mu)$ is given in \eqref{eq:Mbga11}. The first coefficients are ${\widetilde{f}}_{0}(\mu)=1$ and
\begin{equation}
{\widetilde{f}}_{1}(\mu)=\frac{\mu\tau^{2}(\mu^{2}\tau^{5}-13\mu^{2}\tau
^{4}-\mu\tau^{4}+21\tau^{3}\mu+4\mu\tau^{2}-9\tau^{2}-2\tau-1)}{12(\mu\tau
^{2}-1)^{3}(1-\mu\tau)}, \label{eq:Mbga15}%
\end{equation}
where (see \eqref{eq:Mbga05})
\begin{equation}
\tau=\frac{t_{0}}{\mu}=\frac{2}{\beta+1+\sqrt{(\beta+1)^{2}-4\mu}}.
\label{eq:Mbga16}%
\end{equation}
 \end{theorem}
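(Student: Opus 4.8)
The plan is to carry out the reduction already set up in equations \eqref{eq:Mbga02}--\eqref{eq:Mbga08} and then invoke the generic expansion \eqref{eq:met05}. Starting from the integral representation \eqref{eq:Mbga03}, with phase $\phi$ and amplitude $\Phi(t)=1/(t(1-t))$, I would apply the saddle-point transformation \eqref{eq:met03}, $\phi(t)-\phi(t_0)=\psi(s)-\psi(s_0)$ with $\psi(s)=s-\mu\ln s$ and the sign condition $\operatorname{sign}(s-s_0)=\operatorname{sign}(t-t_0)$, to bring the integral into the canonical Laplace form $F_\lambda(z)$ of \eqref{eq:int01}. This isolates the front factor $e^{z-z\mathcal A}z^{-\lambda}$ and the amplitude $f(s)=s\,\Phi(t)\,dt/ds$ of \eqref{eq:Mbga08}, so that the expansion \eqref{eq:Mbga09} follows from \eqref{eq:met05}; normalising by $f_0(\mu)$ as in \eqref{eq:Mbga12} then produces the stated form \eqref{eq:Mbga14}.

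The first technical step is to verify that the transformation is a well-defined analytic change of variable. Because $\phi$ has a single interior saddle $t_0\in(0,1)$ (the relevant root in \eqref{eq:Mbga05}) with $\phi''(t_0)>0$, while $\psi$ has a single minimum at $s_0=\mu$ with $\psi''(s_0)=1/\mu>0$, both sides of \eqref{eq:met03} are nonnegative with a simple quadratic minimum and each increases monotonically to $+\infty$ on its two branches. Matching the branches through the sign condition yields a bijective, real-analytic map $t\leftrightarrow s$ sending $(0,1)\to(0,\infty)$ with $t_0\mapsto s_0=\mu$; near the saddle the map is analytic with nonzero derivative since the ratio of second derivatives is finite and positive. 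This justifies \eqref{eq:Mbga07} and shows that $f(s)$ is analytic in a neighbourhood of $[0,\infty)$, as required to apply \eqref{eq:met05}.

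Next I would generate the coefficients. The value $f_0(\mu)$ follows from l'H\^opital's rule at the saddle, giving $(dt/ds)^2=\psi''(s_0)/\phi''(t_0)$ and hence \eqref{eq:Mbga11}; the higher coefficients $f_n(\mu)$ come from the integration-by-parts recursion of the Appendix. To obtain $\widetilde f_1(\mu)=f_1(\mu)/f_0(\mu)$ explicitly I would expand the transformation \eqref{eq:met03} to the order in $(s-s_0)$ demanded by the recursion, re-express everything through $\tau=t_0/\mu$ of \eqref{eq:Mbga16} by using the saddle equation $t_0^2-(\beta+1)t_0+\mu=0$ to eliminate $\beta$ and $\alpha$, and simplify; this is the most laborious but entirely routine part and should reproduce \eqref{eq:Mbga15}, the normalisation \eqref{eq:Mbga12} forcing $\widetilde f_0(\mu)=1$.

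The main obstacle is uniformity across $b\ge a\ge a_0$, equivalently across all $\mu\ge0$, and in particular the coalescence $\mu\to0$ where the saddle $t_0$ merges with the endpoint at the origin (the \emph{vanishing saddle point}). Here I would check that $f_0(\mu)$, $\mathcal A=A(\mu)$, the coefficients $\widetilde f_n(\mu)$, and the map $t(s)$ all extend analytically and stay bounded as $\mu\to0^+$ (using the expansion \eqref{eq:Mbga06} of $t_0$), so that no coefficient acquires a singularity and the construction degenerates smoothly into the $\mu=0$ Watson's-lemma case, consistent with the special value $M(a,a,z)=e^z$. The decisive estimate is that the remainder after $N$ integrations by parts is bounded by $C_N z^{-N}$ with $C_N$ independent of $\mu$ over the whole range; this rests on $f(s)$ and its relevant derivatives being uniformly bounded on $[0,\infty)$, with the factor $s^{\lambda-1}e^{-zs}$ controlling the tail. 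Since $\lambda=\mu z$ scales with the large parameter, this single bound must be shown to hold whether $\lambda$ is bounded (Watson regime) or large (Laplace regime), and delivering that one uniform estimate is precisely the purpose of the combined method.
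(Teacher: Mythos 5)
Your outline reproduces the paper's route essentially step for step: the reduction \eqref{eq:Mbga03}--\eqref{eq:Mbga08} via the transformation \eqref{eq:met03}, the computation of $f_0(\mu)$ by l'H\^opital at the saddle, the integration-by-parts recursion of the Appendix for the higher coefficients, and the identification of the uniform remainder bound as the one point that actually needs proof. However, at exactly that point your argument has a genuine gap. You assert that the bound $C_Nz^{-N}$ "rests on $f(s)$ and its relevant derivatives being uniformly bounded on $[0,\infty)$" and that delivering this estimate "is precisely the purpose of the combined method" --- but you never deliver it, and the premise as stated is false: $f(s)$ is \emph{not} bounded on $[0,\infty)$. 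As $t\uparrow1$ the map degenerates to $s\sim-\alpha\ln(1-t)$, so $s\to\infty$ while $t$ stays in $(0,1)$, and one finds
\begin{equation*}
f(s)\sim\frac{s-\mu}{\alpha+(1-\beta)e^{-s/\alpha}},\qquad s\to\infty,
\end{equation*}
i.e.\ $f(s)=\mathcal{O}(s)$, with $f'(s)=\mathcal{O}(1)$ and all higher derivatives $\mathcal{O}\left(e^{-s/\alpha}\right)$. This large-$s$ analysis is the substantive content of the paper's proof: the boundedness that matters is that of the iterated functions $f_K(s)$ of the scheme \eqref{eq:app12}, and it follows only because the divided difference $\bigl(f_{K-1}(s)-f_{K-1}(\mu)\bigr)/(s-\mu)$ absorbs the linear growth before the derivative and the factor $s$ are applied. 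Without this computation your "decisive estimate" is a restatement of the theorem's uniformity claim, not a proof of it.

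A smaller omission: your analyticity discussion treats only the real interval. The paper points out that the second saddle $t_+$ of \eqref{eq:Mbga05} (the other root of $\phi'$) is mapped to a complex point $s_+$ that is a genuine singularity of $f(s)$; this is what bounds the width of the domain of analyticity around the positive $s$-axis and is what makes the Cauchy-type integral representation of $f_K(s)$ (used to bound $E_K(z,\mu)$) work. Your claim that $f$ is "analytic in a neighbourhood of $[0,\infty)$" is correct but needs this caveat to be turned into quantitative bounds on the $f_K$. The coefficient computations ($f_0$, $\widetilde f_1$, the elimination of $\beta$ via $(\beta+1)t_0=t_0^2+\mu$) are fine and match the Appendix.
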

 \begin{proof}
For details of the proof we refer to \cite{Temme:1985:LTI}. The relation between $s$ and $t$ is one-to-one and analytic in a domain around the positive $s$-axis, where $f(s)$ of \eqref{eq:Mbga08} is an analytic function. The second saddle point $t_+$ that follows from \eqref{eq:Mbga05} by changing the sign in front of the square root, corresponds with a complex point $s_+$ that follows from the transformation in \eqref{eq:met03}, and this point is a singularity of $f(s)$. 

The representation of  $f_K(s)$ as a Cauchy-type integral derived in \cite[Section~25.2.1]{Temme:2015:AMI} can be useful to obtain a bound of the remainder $E_K(z,\mu)$ shown in the finite expansion \eqref{eq:app12} in the Appendix. For the current case we take $s\ge0$ and consider $f_K(s)$, which is  a linear combination of derivatives of the function $f(s)$ defined in  \eqref{eq:Mbga04}, in particular for large values positive values of $s$. In that case the relation between $s$ and $t$ following from the  transformation in \eqref{eq:met02} becomes in the current case with $\phi(t)$ as in \eqref{eq:Mbga04}, $s\sim-\alpha\ln(1-t)$ as $t\uparrow 1$. This gives for $f(s)$  the estimate
\begin{equation}
\label{eq:Mbga24} f(s)\sim \frac{s-\mu}{\alpha+(1-\beta)e^{-s/\alpha}},\quad s\to \infty,
\end{equation}
Hence,  for large $s$, $f(s)=\bigO(s)$, $f^{\prime}(s)=\bigO(1)$, and all higher derivatives are $\bigO\left(e^{-s/\alpha}\right)$. By using the recursive scheme in \eqref{eq:app12} we can conclude that a positive number $M_K$ exists  such that $\vert f_K(s)\vert\le M_K$, $s\ge0$.
 \end{proof}

\begin{remark}\label{rem:rem01}
The expansions \eqref{eq:Mbga09} and \eqref{eq:Mbga14} with coefficients $f_n(\mu)$ and $\widetilde{f}_n(\mu)$, suggest the separation of the large parameter $z $ and the uniformity parameter $\mu$ as two independent parameters.  However, $\mu$ depends on the three considered parameters $a$, $b$ and $z$, as follows from \eqref{eq:Mbga01}, although the explicit form of the scaled coefficient $\widetilde{f}_{1}(\mu)$ given in  \eqref{eq:Mbga15} (and that of the not shown higher coefficients)
 shows only two parameters $\tau$ and $\mu$. We use the current notation in order to stay close to the method described in the cited literature.
\end{remark}

 Note that ${\widetilde{f}}_{1}(0)=0$, just like all
coefficients ${\widetilde{f}}_{n}(\mu)$ with $n\geq1$ that we have computed.
Also, if $\mu\rightarrow0$, the factor $\Phi=e^{z}\frac{\Gamma(b)}{\Gamma
(a)}z^{a-b}$ becomes $e^{z}$, and the asymptotic expansion gives the correct
value $e^{z}$ when $\mu\rightarrow0$.

In numerical calculations with small values of $\mu$, we should write the front term
$e^{-z\mathcal{A}} f_{0}(\mu)$ with  $t_0$ replaced by $\mu\tau$ (see \eqref{eq:Mbga16}). By
\eqref{eq:Mbga08} and \eqref{eq:Mbga11} we have (see \eqref{eq:met10})
\begin{equation}
\label{eq:Mbga18}f_{0}(\mu)=\frac{1}{\sqrt{\beta\mu\tau^{2}-2\mu\tau+1}},
\quad\mathcal{A}=A(\mu)=\mu\left(  \tau-\ln\tau-1\right)  -\alpha\ln(1-\mu
\tau).
\end{equation}

\subsection{The case {\boldmath{$a\ge b$}}}

\label{subsec:Mageb} We use Lemma~\ref{lem:lem01} and write the function
$F_{\lambda}(z)$ defined for $\lambda>0$ in \eqref{eq:Mbga07} as
\begin{equation}
\label{eq:Mbga19}F_{\lambda}(z)=\frac{\Gamma(-\lambda+1)}{2\pi i}\int%
_{-\infty}^{(0+)} s^{\lambda-1}e^{zs}f(-s)\,ds.
\end{equation}
The right-hand side can be used as the analytic continuation of $F_{\lambda
}(z)$ with respect to $\lambda$ into the half-plane $\lambda\le0$, which gives
\begin{equation}
\label{eq:Mbga20}F_{-\lambda}(z)=\frac{\Gamma(\lambda+1)}{2\pi i}\int%
_{-\infty}^{(0+)} s^{-\lambda-1}e^{zs}f(-s)\,ds,\quad\lambda\ne-1,-2,-3,\ldots
\,.
\end{equation}
The saddle point analysis of the right-hand side of \eqref{eq:Mbga20} proceeds
as for the function $F_{\lambda}(z)$ defined in \eqref{eq:Mbga07}, and we can
use a similar procedure for integration by parts as explained in the Appendix. 
The expansions in \eqref{eq:met06} shows $(-1)^{n}$
because of the slightly different integration by parts method compared with
the one for obtaining \eqref{eq:met05}. On the other hand, the functions in
\eqref{eq:app14} have the same structure with $s$ replaced by $-s$ and $\mu$
by $-\mu$. Thus we find exactly the same expansion as in \eqref{eq:Mbga12} and \eqref{eq:Mbga14}.

\begin{corollary}\label{cor:cor01}
Using Lemma~\ref{lem:lem01} and Theorem~\ref{theo:theo01} we conclude that the asymptotic expansions in \eqref{eq:Mbga12} and \eqref{eq:Mbga14} can be used for $b\ge a\ge a_0$ as well as for
$b_0\le b\le a$, where $a_0$ and $b_0$ are fixed positive numbers.  
\end{corollary}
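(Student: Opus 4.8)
The plan is to obtain the case $a\ge b$ entirely by analytic continuation in the parameter $\lambda=b-a$, exploiting the fact that the whole apparatus of Theorem~\ref{theo:theo01} is organized around the single integral $F_{\lambda}(z)$ of \eqref{eq:Mbga07}. First I would observe that the representation $M(a,b,z)=\frac{\Gamma(b)}{\Gamma(a)}e^{z-z\mathcal{A}}F_{\lambda}(z)$ in \eqref{eq:Mbga07} holds for $\Re\lambda>0$, and that, with $a$ and $z$ held fixed while $b$ varies, both $M(a,b,z)$ and the prefactor $\frac{\Gamma(b)}{\Gamma(a)}e^{z-z\mathcal{A}}$ are analytic functions of $\lambda$ in a neighbourhood of the real axis. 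Consequently $F_{\lambda}(z)$ extends analytically to $\Re\lambda\le0$, and by Lemma~\ref{lem:lem01} this continuation is precisely the loop integral $G_{\lambda}(z)$ with $g(s)=f(-s)$, realized concretely as the representation \eqref{eq:Mbga20}.

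Second, I would re-run the saddle-point reduction on \eqref{eq:Mbga20}. The transformation \eqref{eq:met03} produces the same relation between the variable $t$ and the new variable $s$, the same saddle point $t_{0}$ of \eqref{eq:Mbga05}, and hence the same front factor $\mathcal{A}=A(\mu)$ and the same $f_{0}(\mu)$; the only change is the integration-by-parts scheme, which now yields the alternating expansion \eqref{eq:met06} rather than \eqref{eq:met05}. As explained in Section~\ref{subsec:Mageb}, the functions of \eqref{eq:app14} reappear with $s$ replaced by $-s$ and $\mu$ by $-\mu$, so the factor $(-1)^{n}$ is absorbed and the normalized coefficients $\widetilde{f}_{n}(\mu)$ come out identical in form to those of Theorem~\ref{theo:theo01}. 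This is the point at which the two expansions literally coincide: the single analytic coefficient $\widetilde{f}_{n}(\mu)$, established for $\mu\ge0$, is simply evaluated at $\mu\le0$, giving back \eqref{eq:Mbga12} and \eqref{eq:Mbga14} verbatim.

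Third, I would verify that the error control also carries over, which is where the real content lies: continuation of the function values is automatic from uniqueness, but the corollary is an asymptotic statement, so I must show that the finite-sum remainder $E_{K}(z,\mu)$ of \eqref{eq:app12} admits a uniform bound for $\mu\le0$. The route is the Cauchy-type integral representation of $f_{K}(s)$ used in the proof of Theorem~\ref{theo:theo01}: one needs the map $s\mapsto t$ to remain one-to-one and analytic in a neighbourhood of the contour employed for $G_{\lambda}(z)$, and one needs a growth estimate analogous to \eqref{eq:Mbga24} to persist, so that $\vert f_{K}(s)\vert\le M_{K}$ holds there. The main obstacle is therefore confirming that the singularity of $f$ arising from the second saddle point $t_{+}$ stays bounded away from the deformed path uniformly in $\beta$ down to $b=b_{0}>0$, and that the expansion joins smoothly across $\mu=0$. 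The latter is reassured by the facts that $\widetilde{f}_{n}(0)=0$ for $n\ge1$ and that the prefactor reduces to $e^{z}$ there, so no discontinuity can arise at the junction of the two parameter ranges, and the uniformity stated in Theorem~\ref{theo:theo01} passes to $b_{0}\le b\le a$.
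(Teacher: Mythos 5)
Your proposal is correct and follows essentially the same route as the paper: apply Lemma~\ref{lem:lem01} to realize $F_{-\lambda}(z)$ as the loop integral \eqref{eq:Mbga20}, rerun the saddle-point and integration-by-parts analysis to see that the $(-1)^{n}$ of \eqref{eq:met06} is absorbed by the substitutions $s\to-s$, $\mu\to-\mu$ in \eqref{eq:app14}, and conclude that the expansions \eqref{eq:Mbga12} and \eqref{eq:Mbga14} persist for $b\le a$. Your third paragraph on transporting the remainder bound $\vert f_{K}(s)\vert\le M_{K}$ and the location of the singularity $s_{+}$ is a welcome extra precaution that the paper leaves implicit, but it does not change the argument.
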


\subsection{The range of the three parameters}\label{sec:range}

With the upper bound of the remainder of the expansion, as stated in the proof of Theorem~\ref{theo:theo01},
global information may become available about the expansion and the range of the parameters. However, it is always useful to look at the coefficients of the expansion to get more detailed information. We discuss two points of interest in which limiting forms of the coefficients are relevant, and both points apply to the expansion in  \eqref{eq:Mbga14} as well as to that in \eqref{eq:Ubga12} for the $U$-function.

\begin{itemize}
\item  {\bf The behaviour for large values of \protectbold{\mu}.}

The initial interest to derive the expansions in this paper is validity when the parameter $\mu$ tends to zero. But we also want to find out how the expansion behaves for larger values of $\mu$.

Inspecting the coefficient ${\widetilde{f}}_{1}(\mu)$ in \eqref{eq:Mbga15} we see that for large values of $\mu$ it behaves like $\bigO(1/\mu)$, uniformly with respect to the parameter $\tau$ given in  \eqref{eq:Mbga16}. For the coefficients we derived for the numerical computations we find ${\widetilde{f}}_{k}(\mu)=\bigO(1/\mu^k)$ as $\mu \to \infty$. From this behaviour we conclude that the expansion in \eqref{eq:Mbga14} has a double asymptotic property: it is valid when $\mu$ or $z$ is large or if both are large. 

Numerical experiments confirm this property. If we take $ a=0.5$,  $b=0.7$,  $z=100$, our expansions with terms up to $n=4$ gives a result with relative error $2.0407\times10^{-11}$ when we use a test based on the Wronski relation for the Kummer functions. When we take $ a=50.5$,  $b=100.7$,  $z=1$, the Wronski test gives the error $6.51\times10^{-13}$.

\item  {\bf The behaviour for  \protectbold{t_0\to1}.}

The expansion in \eqref{eq:Mbga12} becomes useless when  $t_0\to1$. In that case the  factor $\mu\tau-1=t_0-1$  in the denominator of ${\widetilde{f}}_{1}(\mu)$ in \eqref{eq:Mbga15} tends to zero. This happens with all coefficients ${\widetilde{f}}_{k}(\mu)$ of the expansion, even so
that every ${\widetilde{f}}_{k}(\mu)$, $k\ge1$, has a factor $(\mu\tau-1)^k$ in the denominator.

By setting the numerator of $\phi^\prime(t)$ in \eqref{eq:Mbga04} equal to 0 we find that when  $t_0$ is replaced by $\rho$, where $\rho\in(0,1)$, the following linear relation between $\alpha$ and $\beta$ arises:
\begin{equation}
\label{eq:Mbga21}\alpha=\rho^2-\rho+(1-\rho)\beta.
\end{equation}
In Figure~\ref{fig:fig01} we show the shaded domain between the line $\alpha=\beta$ (for these values $t_0=0$) and the line that follows from the relation in \eqref{eq:Mbga21} (where $t_0=\rho$). Inside the coloured domain we have $0\le t_0 \le \rho$. After selecting $\rho$ we can use the condition $\alpha\ge \rho^2-\rho+(1-\rho)\beta$ on the parameters $\alpha $ and $\beta$ to use the asymptotic expansion with $t_0\le \rho$. In the figure we have taken $\rho=\frac{4}{5}$.

In the sector above the diagonal $\alpha=\beta$ we have $t_0<0$, because in that case $\mu <0$. As we have explained in the previous subsection, we can use the expansion in \eqref{eq:Mbga12}  also for $\mu <0$, and we see that for all positive values of $\alpha$ and $ \beta$ above the line governed by the relation in  \eqref{eq:Mbga21} we have $t_0\le\rho$.

The other  quantity  $\mu\tau^2-1=t_0^2/\mu-1$  in the denominator of ${\widetilde{f}}_{1}(\mu)$  vanishes  if $(\beta+1)t_0=2\mu$, which implies $(\beta-1)^2=-4\alpha$. This cannot happen if $\alpha$ and $ \beta$  are positive. In fact, when this happens two saddle points coincide, and we need Airy functions to describe the asymptotic behaviour; see \cite{Dunster:1989:UAE}.

\end{itemize}

\begin{figure}[tb]
\begin{center}
        \includegraphics[width=6.5cm]{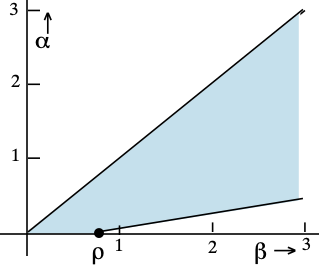} 
\end{center}
\caption{\small
In the coloured domain the saddle point $t_0$ satisfies $0\le t_0\le \rho<1$. The line from the point $(\rho,0)$ to the right has the equation given in \eqref{eq:Mbga21}. In the figure we have used $\rho=\frac45$. For further details we refer to the text.}
\label{fig:fig01}
\end{figure}

We conclude that we can use the expansions  in \eqref{eq:Mbga12} and  \eqref{eq:Ubga12} for a wide range of the parameters $a$, $b$ and $z$.

\section{The expansion of {\boldmath{$U(a,b,z)$}}}

\label{sec:Ubgea} In this section we again use the notation
\begin{equation}
\lambda=b-a,\quad\mu=\frac{\lambda}{z}=\beta-\alpha=\frac{b-a}{z},
\label{eq:Ubga01}%
\end{equation}
and we start the analysis assuming that $b\geq a$. We take the contour
integral
\begin{equation}
U(a,b,z)=\frac{\Gamma(1-a)}{2\pi i}\int_{{-\infty}}^{{(0+)}}e^{{zs}}s^{{a-1}%
}{(1-s)^{{b-a-1}}}ds,\quad\Re z>0, \label{eq:Ubga02}%
\end{equation}
where $a\neq1,2,3,\ldots$. The contour cuts the real axis between $0$ and~$1$.
At that point the fractional powers are determined by $\mathrm{ph}\,(1-s)=0$
and $\mathrm{ph}\,s=0$. We use the Kummer relation $U(a,b,z)=z^{1-b}%
U(a-b+1,2-b,z)$ and obtain
\begin{equation}
U(a,b+1,z)=\frac{z^{-b}\Gamma(\lambda+1)}{2\pi i}\int_{{-\infty}}^{{(0+)}%
}e^{zt}t^{-\lambda-1}(1-t)^{-a}\,dt. \label{eq:Ubga03}%
\end{equation}
We write this in the form
\begin{equation}
U(a,b+1,z)=\frac{z^{-b}\Gamma(\lambda+1)}{2\pi i}\int_{{-\infty}}^{{(0+)}%
}e^{z\phi(t)}\,\frac{dt}{t}, \label{eq:Ubga04}%
\end{equation}
where
\begin{equation}
\phi(t)=t-\alpha\ln(1-t)-\mu\ln t,\quad\phi^{\prime}(t)=-\frac{t^{2}%
-(\beta+1)t+\mu}{t(1-t)}. \label{eq:Ubga05}%
\end{equation}
The saddle point $t_{0}$ inside the interval $(0,1)$ is given by
\begin{equation}
t_{0}=\tfrac{1}{2}(\beta+1)-\tfrac{1}{2}\sqrt{(\beta+1)^{2}-4\mu}\,=\frac
{2\mu}{\beta+1+\sqrt{(\beta+1)^{2}-4\mu}}, \label{eq:Ubga06}%
\end{equation}
with expansion
\begin{equation}
t_{0}=\frac{\mu}{\beta+1}+\frac{\mu^{2}}{(\beta+1)^{3}}+\mathcal{O}\left(
\mu^{3}\right)  ,\quad\mu\rightarrow0. \label{eq:Ubga07}%
\end{equation}
Again, we see that the saddle point $t_{0}$ vanishes as $\mu\rightarrow0$.

We use the transformation shown in \eqref{eq:met03} and obtain
\begin{equation}
\label{eq:Ubga08}U(a,b+1,z)=z^{-b}e^{z\mathcal{A}} G_{\lambda}(z),\quad
G_{\lambda}(z)=\frac{\Gamma(\lambda+1)}{2\pi i}\int_{{-\infty}}^{{(0+)}}
s^{-\lambda-1} e^{zs}p(s)\,ds,
\end{equation}
where, with $s_{0}=\mu$,
\begin{equation}
\label{eq:Ubga09}\mathcal{A}=\phi(t_{0})-\psi(s_{0}), \quad p(s)=\frac{s}%
{t}\frac{dt}{ds},\quad\frac{dt}{ds}=\frac{\psi^{\prime}(s)}{\phi^{\prime}(t)},\quad
p(s)=\frac{(t-1)(s-\mu)}{t^2-(\beta+1)t+\mu}.
\end{equation}
As in \eqref{eq:met06} we can obtain the expansion
\begin{equation}
\label{eq:Ubga10}U(a,b+1,z)\sim e^{z\mathcal{A}} z^{-a}\sum_{n=0}^{\infty
}(-1)^{n}\frac{p_{n}(\mu)}{z^{n}},\quad z\to\infty.
\end{equation}
The first coefficient is
\begin{equation}
\label{eq:Ubga11}p_{0}(\mu)=\frac{\mu}{t_{0}}\sqrt{\frac{\psi^{\prime\prime
}(s_{0})}{\phi^{\prime\prime}(t_{0})}}=(1-t_{0})\sqrt{\frac{\mu}{\beta
t_{0}^{2}-2\mu t_{0}+\mu}}.
\end{equation}

When the parameters are large it may be convenient in numerical tests to use a
scaled function as we did for the $M$-function in \eqref{eq:Mbga13}. Here we
write
\begin{equation}
\label{eq:Ubga12}U(a,b+1,z)=z^{-a}{\widetilde{U}}(a,b+1,z)
\end{equation}
We summarise the results for the $U$-function in the following theorem.

\begin{theorem}\label{theo:theo02}
The scaled Kummer function defined in \eqref{eq:Ubga12}
has the asymptotic expansion 
\begin{equation}
\label{eq:Ubga13}{\widetilde{U}}(a,b+1,z)\sim e^{z\mathcal{A}} p_{0}(\mu)\sum_{n=0}^{\infty
}(-1)^{n}\frac{{\widetilde{p}}_{n}(\mu)}{z^{n}}, \quad
{\widetilde{p}}_{n}(\mu)=\frac{p_{n}(\mu)}{p_{0}(\mu)},\quad z\to \infty,
\end{equation}
uniformly with respect to $ b\ge a\ge a_0$, where $a_0$ is a fixed positive parameter,  
$\mu$ is defined in \eqref{eq:Ubga01},  $\mathcal{A}=A(\mu)$ is defined in  
\eqref{eq:met10}, $p_0(\mu)$ is given in \eqref{eq:Ubga11}.  The first coefficients of the expansion \eqref{eq:Ubga12} are ${\widetilde{p}}_{0}(\mu)=1$ and
\begin{equation}
\label{eq:Ubga14}\widetilde{p}_{1}(\mu)=\frac{\mu\tau^{2}(1-\tau)\left(
\mu^{2}\tau^{4} - \mu\tau^{3} + 8\mu\tau^{2} - 9\tau+ 1\right)  }{12(\mu
\tau^{2} - 1)^{3}(\mu\tau-1)},
\end{equation}
where $\tau=t_{0}/\mu$, with $t_0$ given \eqref{eq:Ubga06}. 
 \end{theorem}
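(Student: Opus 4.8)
The plan is to establish Theorem~\ref{theo:theo02} by closely paralleling the argument already used for the $M$-function in Theorem~\ref{theo:theo01}, exploiting the fact that the two constructions share the \emph{same} phase function $\phi(t)$ (compare \eqref{eq:Mbga04} and \eqref{eq:Ubga05}) and hence the same saddle point $t_0$ and the same front exponent $\mathcal{A}=A(\mu)$. The only structural difference is that the $U$-function is represented by the loop integral \eqref{eq:Ubga04} rather than the real Laplace integral \eqref{eq:Mbga03}, so the natural object is $G_\lambda(z)$ of \eqref{eq:int02} with amplitude $p(s)=s\,t^{-1}\,dt/ds$ in place of $f(s)=s\,[t(1-t)]^{-1}\,dt/ds$. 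First I would record that, after the vanishing-saddle transformation \eqref{eq:met03}, the representation \eqref{eq:Ubga08} holds with $p(s)=(t-1)(s-\mu)/(t^2-(\beta+1)t+\mu)$ as stated in \eqref{eq:Ubga09}; this is a one-to-one analytic change of variable in a neighbourhood of the positive $s$-axis for exactly the reasons given in the proof of Theorem~\ref{theo:theo01}, since the map $s\mapsto t$ is governed by the identical equation \eqref{eq:met03}.

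Next I would apply the contour integration-by-parts scheme of the Appendix to $G_\lambda(z)$, yielding the expansion \eqref{eq:met06} with the alternating sign $(-1)^n$ and coefficients $p_n(\mu)$; the leading coefficient \eqref{eq:Ubga11} follows from the l'H\^opital computation of $dt/ds$ at the saddle, identical in form to \eqref{eq:Mbga10}--\eqref{eq:Mbga11} but carrying the extra factor $(1-t_0)=\mu/t_0$ coming from the differing amplitude $1/t$ versus $1/[t(1-t)]$. Normalising by $p_0(\mu)$ gives \eqref{eq:Ubga13}, and the explicit $\widetilde p_1(\mu)$ in \eqref{eq:Ubga14} is produced by the same symbolic recursion. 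The remaining and genuinely substantive point, as in the $M$-case, is the uniform bound on the remainder $E_K(z,\mu)$: I would invoke the Cauchy-type representation of $p_K(s)$ from \cite[Section~25.2.1]{Temme:2015:AMI} together with the large-$s$ behaviour of the amplitude, deriving for $p(s)$ an estimate of the same shape as \eqref{eq:Mbga24} so that $p(s)=\bigO(s)$, $p'(s)=\bigO(1)$, and higher derivatives decay like $e^{-s/\alpha}$, whence $|p_K(s)|\le M_K$ on $s\ge0$ uniformly in $\mu$ for $b\ge a\ge a_0$.

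The main obstacle I anticipate is precisely this uniformity of the remainder bound, because the loop-integral setting means the relevant analyticity domain for $p(s)$ must contain the path that wraps the branch cut, and one must verify that the singularity of $p(s)$ arising from the second saddle $t_+$ (the root of \eqref{eq:Ubga06} with the opposite sign before the square root) stays bounded away from the integration contour uniformly as $\mu\to0$ and as $\mu$ grows. Since $\phi(t)$ is shared with the $M$-case, the location of this complex singularity $s_+$ is literally the same, so the distance estimates transfer verbatim; the only check specific to $U$ is that the factor $(t-1)$ in the numerator of $p(s)$ does not introduce a new obstruction, which it does not because $t=1$ corresponds to $s\to\infty$ along the cut where $e^{zs}$ provides decay. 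Once this transfer is made, Lemma~\ref{lem:lem01} (already used for the $M$-function in Corollary~\ref{cor:cor01}) extends the validity from $b\ge a$ to $b\le a$, completing the proof exactly as for Theorem~\ref{theo:theo01}.
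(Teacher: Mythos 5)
Your overall architecture matches the paper's: same phase $\phi(t)$, same saddle and front factor $\mathcal{A}$, loop-integral integration by parts giving the $(-1)^n$ expansion, the l'H\^opital computation of $p_0(\mu)$ with the extra factor $(1-t_0)$, and Lemma~\ref{lem:lem01} for the extension to $a\ge b$. The gap is in the remainder bound, where you transplant the wrong asymptotic regime. For $b\ge a$ we have $\lambda\ge0$, so the loop in \eqref{eq:Ubga08} cannot be collapsed onto a half-line, and the relevant domain for bounding $p_K$ is not $s\ge0$ but the steepest-descent loop $\Im\psi(s)=\Im\psi(\mu)=0$, i.e.\ $s=\rho e^{i\theta}$ with $\rho=\mu\theta/\sin\theta$, $-\pi<\theta<\pi$, which recedes to infinity in the directions $\mathrm{ph}\,s\to\pm\pi$. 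Along that contour the image point $t$ does \emph{not} tend to $1$; it tends to infinity with $t\sim s$ (both $\phi$ and $\psi$ reduce to their linear terms there), so the correct large-$s$ statement is $p(s)\sim1$ with all derivatives bounded --- not an estimate ``of the same shape as \eqref{eq:Mbga24}'' with $p(s)=\bigO(s)$, $p^{\prime}(s)=\bigO(1)$ and $e^{-s/\alpha}$ decay of higher derivatives. Your assertion that ``$t=1$ corresponds to $s\to\infty$ along the cut'' is the tell-tale sign of the confusion: $t\uparrow1$ is the endpoint behaviour of the $M$-integral over $(0,1)$, whereas the $U$-loop merely crosses $(0,1)$ at the saddle $t_0$ and escapes to $-\infty$, never approaching $t=1$.

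This is repairable --- the correct estimate $p(s)\sim1$ is in fact stronger than the $\bigO(s)$ you claim --- but as written your justification of $|p_K(s)|\le M_K$ rests on a picture of the contour that is false for the loop integral, so the uniformity argument does not go through as stated. The paper's proof handles exactly this point by parametrising the saddle-point contour explicitly (equation \eqref{eq:Ubga15}), checking that $s\mapsto t$ is one-to-one and $p$ analytic on it, reading off $p(s)\sim1$ from \eqref{eq:Ubga09}, and only then invoking the same remainder machinery as in Theorem~\ref{theo:theo01}. Your remark that the singularity $s_+$ coming from the second saddle $t_+$ transfers from the $M$-case is fine, since the transformation \eqref{eq:met03} is shared; it is the large-$|s|$ analysis, not the local one, that needs to be redone on the loop.
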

\begin{proof}
The saddle point contour of the integral in \eqref{eq:Ubga08} is given by $\Im\psi(s)=\Im\psi(\mu)=0$, and is governed by \begin{equation}
\label{eq:Ubga15}
\rho=\mu\frac{\theta}{\sin\theta},\quad  s=\rho e^{i\theta}, \quad -\pi<\theta<\pi.
\end{equation}
In the $t$-plane a similar contour through the saddle point $t_0$ can be defined. On these contours  the relation between $t$ and $s$ is one-to-one, and $p(s)$ is analytic on the contour given in \eqref{eq:Ubga15}. For large $s$ and $t$ on the saddle point contours we have $s\sim t$, and from \eqref{eq:Ubga09} we conclude that $p(s)\sim1$ for large $s$ on the contour given in \eqref{eq:Ubga15}, and it can be verified that all derivatives are bounded. It follows, as in the proof of Theorem~\ref{theo:theo01}, that we can find a bound for the remainder in the finite expansion related to the expansion given in \eqref{eq:Ubga13}.
\end{proof}

For small values of $\mu$, we
write the quantities $p_{0}(\mu)$ and $A$ in terms of $\tau$. We have
\begin{equation}
\label{eq:Ubga16}p_{0}(\mu)=\frac{1-\mu\tau}{\sqrt{\beta\mu\tau^{2}-2\mu
\tau+1}}, \quad\mathcal{A}=A(\mu)=\mu\left(  \tau-\ln\tau-1\right)  -\alpha
\ln(1-\mu\tau).
\end{equation}
We see that $\mathcal{A}\to0$ and $p_{0}(\mu)\to1$ as $\mu\to0$, that is, as
$b\to a$. Also, all coefficients $\widetilde{p}_{n}(\mu)$, $n\ge1$, tend to
zero when $\mu\to0$ and in that case ${\widetilde{U}}(a,b+1,z)\to1$. This
confirms that the expansion of $U(a,b+1,z)$ tends to the elementary value
$U(a,a+1,z)=z^{-a}$ given in \eqref{eq:met09}.

\subsection{The case {\boldmath{$a\ge b$}}}
\label{subsec:Uageb} 
As in Section~\ref{subsec:Mageb} we have the following  result about the case $a\ge b$.
\begin{corollary}\label{cor:cor02}
Using Lemma~\ref{lem:lem01} and Theorem~\ref{theo:theo02} we conclude that the asymptotic expansions in \eqref{eq:Ubga10} and \eqref{eq:Ubga13} can be used for $b\ge a\ge a_0$ as well as for
$b_0\le b\le a$, where $a_0$ and $b_0$ are fixed positive numbers.  
\end{corollary}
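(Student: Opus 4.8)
The plan is to mirror the reduction of Section~\ref{subsec:Mageb}, now applied to the loop integral of the $U$-function. For $a\ge b$ we have $\lambda=b-a\le0$, equivalently $\mu=\beta-\alpha\le0$, and the representation \eqref{eq:Ubga08} already expresses $U(a,b+1,z)$ through $G_\lambda(z)$, which by Lemma~\ref{lem:lem01} is analytic in $\lambda$ throughout the plane away from the poles of $\Gamma(\lambda+1)$. First I would invoke \eqref{eq:met07}: since $\Re\lambda<0$ in this range, the loop can be collapsed onto the cut and
\begin{equation*}
G_\lambda(z)=\frac{1}{\Gamma(-\lambda)}\int_0^\infty s^{-\lambda-1}e^{-zs}p(-s)\,ds,
\end{equation*}
a convergent Laplace-type integral of the form \eqref{eq:int01} with the genuinely positive index $-\lambda=a-b$ and effective uniformity parameter $-\mu=\alpha-\beta>0$.

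Next I would run the saddle point and integration-by-parts scheme of the Appendix on this integral exactly as in the proof of Theorem~\ref{theo:theo02}. The governing saddle is still $t_0$ of \eqref{eq:Ubga06}, which for $\mu<0$ simply moves onto the negative real axis, and the transformation \eqref{eq:met03} maps it to $s_0=\mu$. Because the construction is structurally identical under the replacements $s\to-s$ and $\mu\to-\mu$ --- the same symmetry already used for the $M$-function and reflected in the alternating sign of \eqref{eq:Ubga10} --- the scheme returns the same explicit rational functions $p_n$, $\widetilde p_n$ and the same front-term functions $A(\mu)$ and $p_0(\mu)$ of \eqref{eq:Ubga11}, \eqref{eq:Ubga14} and \eqref{eq:Ubga16}, now evaluated at the corresponding negative value of $\mu$. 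Since $\tau$ in \eqref{eq:Ubga06} is analytic in $\mu$ across $\mu=0$, these formulas continue without change and reproduce \eqref{eq:Ubga10} and \eqref{eq:Ubga13} verbatim for $b_0\le b\le a$.

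The step I expect to be the main obstacle is the uniform control of the remainder in this continued range, not the formal reproduction of the coefficients. One must check that, with $t_0<0$, the map $s\mapsto t$ defined by \eqref{eq:met03} stays one-to-one and analytic along $[0,\infty)$, that the companion saddle obtained by reversing the sign of the square root in \eqref{eq:Ubga06} and its image $s_+$ --- a singularity of the integrand --- remain off the path of integration, and that $p(-s)$ together with its derivatives stays bounded there; in particular the large-$s$ estimate analogous to \eqref{eq:Mbga24} must be re-derived for the reflected integrand. Securing these bounds uniformly down to the fixed lower limit $b=b_0>0$, which also keeps $\lambda$ clear of the poles of $\Gamma(\lambda+1)$, is what upgrades the formal series to the asymptotic statement of the corollary.
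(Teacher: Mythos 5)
Your proposal is correct and follows essentially the same route as the paper: Corollary~\ref{cor:cor02} is justified there exactly as in Section~\ref{subsec:Mageb}, by using Lemma~\ref{lem:lem01} (equivalently \eqref{eq:met07}) to pass between the loop integral and the real-line Laplace integral when $\lambda=b-a$ changes sign, and by observing that the integration-by-parts scheme is structurally unchanged under $s\to-s$, $\mu\to-\mu$, so the coefficients and front terms continue verbatim. Your closing remark about keeping $\lambda$ away from the poles of $\Gamma(\lambda+1)$ is unnecessary (the lemma asserts analyticity of $G_\lambda(z)$ for all $\lambda$, and the collapsed real-line representation is regular at negative integer $\lambda$), but this does not affect the argument.
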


\section{Numerical verifications}

\label{sec:num} For a detailed discussion on the computation of Kummer
functions and other hypergeometric functions we refer to the recent paper
\cite{Johansson:2019:CHF}, where arbitrary-precision implementations are
considered. Our paper focuses on asymptotic methods for the Kummer functions,
and in this section we give information on the performance of our expansions
using a limited number of terms.

To avoid comparisons by using other software, the relative errors shown in the
tables are computed by verifying recurrence relations written in the stable
forms
\begin{equation}
\label{eq:num01}\frac{zM(a+1,b+1,z)+bM(a,b,z)}{bM(a+1,b,z)}=1,\quad
\frac{aU(a+1,b,z)+U(a,b-1,z)}{U(a,b,z)}=1.
\end{equation}
When we use the scaled functions introduced in \eqref{eq:Mbga13} and
\eqref{eq:Ubga12} we can write these relations as
\begin{equation}
\label{eq:num02}\frac{z{\widetilde{M}}(a+1,b+1,z)+a{\widetilde{M}}%
(a,b,z)}{z{\widetilde{M}}(a+1,b,z)}=1,\quad\frac{a{\widetilde{U}%
}(a+1,b,z)+z{\widetilde{U}}(a,b-1,z)}{z{\widetilde{U}}(a,b,z)}=1.
\end{equation}
We can also use the relation
\begin{equation}
\label{eq:num03}aM(a,b,z)\, U(a+1,b+1,z)+\frac{a}{b}%
M(a+1,b+1,z)\,U(a,b,z)=\frac{e^{z}\Gamma(b)}{z^{b}\Gamma(a)},
\end{equation}
which follows from the Wronskian of the Kummer functions (see
\cite[Section3~13.2(vi), 13.3(ii)]{Olde:2010:CHF}). In terms of the scaled
functions we can write
\begin{equation}
\label{eq:num04}\frac{a}{z}{\widetilde{M}}(a,b,z)\,{\widetilde{U}%
}(a+1,b+1,z)+{\widetilde{M}}(a+1,b+1,z)\,{\widetilde{U}}(a,b,z)=1.
\end{equation}

We give tables showing the relative errors in the computations for a selection
of the parameters $a$, $b$ and $z$. Our computations are done with Maple
(version 2021.2), with $Digits=16$, without using the multi-precision
possibilities. We have compared our asymptotic results with Maple's $KummerM$
and $KummerU$ function codes, and found that for the $M$-functions this
comparison is reliable, but for the $U$-functions it is not (again, with
$Digits=16$). For example, when we take $a=130.0$, $b=25.1$ and $z=100.0$, Maple
gives the value $U(a,b+1,z)=-2.033\ldots\times10^{-234}$, a negative
result\footnote{With these parameter values and $Digits=32$ Maple's procedure
$KummerU$ gives $3.8723892985558665\times10^{-293}$ with $U_{asym}%
(a,b+1,z)\doteq3.872389298556251\times10^{-293}$. Compared with a computation
with Matlab, $Digits=32$, $KummerU$ gives $3.8723892985558665\times10^{-293}$,
identical with the Maple result.}. Matlab (version R2021b, $Digits=16$) gives
$U(a,b+1,z)=3.872389298555866\times10^{-293}$, and our asymptotic result gives
$U_{asym}(a,b+1,z)=3.872389298556390\times10^{-293}$.

We have also done a few other tests with Matlab (version R2021b, $Digits=16$)
for the parameter values $a=130.0$ $b=25.1$ and $z=100.0$, and we conclude
that Matlab performs better than Maple in this example, with a recursion test
on $KummerU$ based on \eqref{eq:num01} giving a relative error $0$. 


In Table \ref{tab:tab01} we give the relative errors in the computation of the
scaled functions ${\widetilde{M}}(a,b,z)$ and ${\widetilde{U}}(a,b,z)$ for
$z=500$, $b=500$, several values of $a$ by using expansions \eqref{eq:Mbga14}
and \eqref{eq:Ubga12} with terms up to $n=4$. The errors are computed by using
the scaled recurrence relations in \eqref{eq:num02}. We observe that for these
$z$, $a$ and $b$ for $n=0$ (expansions with only one term equal to~1) the
approximations give a nice estimate, and that for $n=3$ and $n=4$ the relative
errors are nearly the same.

In Table \ref{tab:tab02} we give the relative errors in the computation of the
Wronski relation \eqref{eq:num04} for the scaled functions ${\widetilde{M}%
}(a,b,z)$ and ${\widetilde{U}}(a,b,z)$ for $z=500$ and a selection values of
$a$ and $b$ by using expansion \eqref{eq:Mbga14} and \eqref{eq:Ubga12} with
terms up to $n=4$. The better values are for the smaller values of $\vert
a-b\vert$. When we repeat the computations with $z=5$ and $z=50$ with the same
values of $a$ and $b$, the relative errors match the values of this table
quite well. This indicates, as explained in \S\ref{sec:range}, that our new expansions include the previous results obtained in \cite{Temme:2021:AKH} for only large $a$ and $b$.

\renewcommand{\arraystretch}{1.2} \begin{table}[ptb]
\caption{ Relative errors in the computation of the scaled functions
${\protect\widetilde{M}}(a,b,z)$ and ${\protect\widetilde{U}}(a,b,z)$ for
$z=500$, $b=500$, several values of $a$ by using expansion\eqref{eq:Mbga14}
with terms up to $n=4$. The errors are computed by using the scaled recurrence
relations in \eqref{eq:num02}. }%
\label{tab:tab01}
\[%
\begin{array}
[c]{rcccccc}%
a\quad & n=0 & n=1 & n=2 & n=3 & n=4 & \\\hline
&  &  & {\widetilde{M}(a,b,z)} &  &  & \\\hline
99 & 0.48 \times10^{-05} & 0.43 \times10^{-08} & 0.16 \times10^{-09} & 0.76
\times10^{-12} & 0.46 \times10^{-13} & \\
199 & 0.16 \times10^{-05} & 0.12 \times10^{-08} & 0.98 \times10^{-11} & 0.10
\times10^{-14} & 0.46 \times10^{-13} & \\
299 & 0.82 \times10^{-06} & 0.55 \times10^{-09} & 0.14 \times10^{-11} & 0.10
\times10^{-12} & 0.11 \times10^{-12} & \\
399 & 0.51 \times10^{-06} & 0.30 \times10^{-09} & 0.30 \times10^{-12} & 0.16
\times10^{-14} & 0.39 \times10^{-14} & \\
499 & 0.35 \times10^{-06} & 0.19 \times10^{-09} & 0.39 \times10^{-13} & 0.10
\times10^{-14} & 0.40 \times10^{-15} & \\\hline
501 & 0.35\times10^{-06} & 0.19\times10^{-09} & 0.37\times10^{-13} &
0.10\times10^{-14} & 0.10\times10^{-14} & \\
601 & 0.26\times10^{-06} & 0.13\times10^{-09} & 0.53\times10^{-13} &
0.25\times10^{-13} & 0.25\times10^{-13} & \\
701 & 0.20\times10^{-06} & 0.89\times10^{-10} & 0.64\times10^{-13} &
0.23\times10^{-13} & 0.23\times10^{-13} & \\
801 & 0.16\times10^{-06} & 0.66\times10^{-10} & 0.74\times10^{-13} &
0.33\times10^{-13} & 0.33\times10^{-13} & \\
901 & 0.13\times10^{-06} & 0.51\times10^{-10} & 0.24\times10^{-12} &
0.20\times10^{-12} & 0.20\times10^{-12} & \\\hline
&  &  & {\widetilde{U}(a,b,z)} &  &  & \\\hline
99 & 0.29\times10^{-05} & 0.45\times10^{-08} & 0.17\times10^{-09} &
0.28\times10^{-12} & 0.39\times10^{-13} & \\
199 & 0.84\times10^{-06} & 0.15\times10^{-09} & 0.11\times10^{-10} &
0.34\times10^{-13} & 0.66\times10^{-13} & \\
299 & 0.40\times10^{-06} & 0.77\times10^{-10} & 0.18\times10^{-11} &
0.35\times10^{-13} & 0.42\times10^{-13} & \\
399 & 0.23\times10^{-06} & 0.80\times10^{-10} & 0.40\times10^{-12} &
0.10\times10^{-14} & 0.10\times10^{-14} & \\
499 & 0.15\times10^{-06} & 0.63\times10^{-10} & 0.87\times10^{-13} &
0.10\times10^{-14} & 0.00\times10^{-00} & \\\hline
501 & 0.14\times10^{-06} & 0.62\times10^{-10} & 0.85\times10^{-13} &
0.00\times10^{-00} & 0.80\times10^{-15} & \\
601 & 0.10\times10^{-06} & 0.48\times10^{-10} & 0.37\times10^{-14} &
0.10\times10^{-14} & 0.17\times10^{-14} & \\
701 & 0.73\times10^{-07} & 0.37\times10^{-10} & 0.43\times10^{-13} &
0.64\times10^{-13} & 0.64\times10^{-13} & \\
801 & 0.55\times10^{-07} & 0.28\times10^{-10} & 0.11\times10^{-13} &
0.36\times10^{-13} & 0.36\times10^{-13} & \\
901 & 0.43\times10^{-07} & 0.23\times10^{-10} & 0.16\times10^{-12} &
0.18\times10^{-12} & 0.18\times10^{-12} & \\\hline
\end{array}
\]
\end{table}\renewcommand{\arraystretch}{1.0}

\renewcommand{\arraystretch}{1.2} \begin{table}[ptb]
\caption{ Relative errors in the computation of the Wronski relation
\eqref{eq:num04} for the scaled functions ${\protect\widetilde{M}}(a,b,z)$ and
${\protect\widetilde{U}}(a,b,z)$ for $z=500$ and several values of $a$ and $b$
by using expansion \eqref{eq:Mbga14} and \eqref{eq:Ubga12} with terms up to
$n=4$. }%
\label{tab:tab02}
\[%
\begin{array}
[c]{rcccccc}%
\ a \quad & b=101 & b=301 & b=501 & b=701 & b=901 & \\\hline
101 & 0.00 \times10^{-00} & 0.46 \times10^{-12} & 0.14 \times10^{-11} & 0.42
\times10^{-12} & 0.71 \times10^{-12} & \\
301 & 0.52 \times10^{-13} & 0.40 \times10^{-15} & 0.32 \times10^{-13} & 0.73
\times10^{-13} & 0.63 \times10^{-13} & \\
501 & 0.13 \times10^{-12} & 0.15 \times10^{-13} & 0.10 \times10^{-13} & 0.50
\times10^{-14} & 0.27 \times10^{-13} & \\
701 & 0.17 \times10^{-12} & 0.89 \times10^{-13} & 0.31 \times10^{-13} & 0.00
\times10^{-00} & 0.67 \times10^{-13} & \\
901 & 0.14 \times10^{-12} & 0.14\times10^{-12} & 0.18 \times10^{-12} & 0.14
\times10^{-13} & 0.10 \times10^{-14} & \\\hline
\end{array}
\]
\end{table}\renewcommand{\arraystretch}{1.0}

To handle ratios of gamma functions with large arguments, which occur in the
expansion in \eqref{eq:Mbga12}, we can use
\begin{equation}
\label{eq:num05}\frac{\Gamma(b)}{\Gamma(a)}=e^{\ln\Gamma(b)-\ln\Gamma(a)},
\end{equation}
or the asymptotic expansions of $\Gamma(z)$ or $\ln\Gamma(z)$; see
\cite[Section~5.11]{Askey:2010:GAM}.

Another aspect that requires attention in numerical evaluations of the
asymptotic results is the factor $e^{\pm z \mathcal{A}}$ in front of the
expansions. $\mathcal{A}$ has always the form $\pm A(\pm\mu)$, with
$A(\mu)=\mu\left(  \tau-\ln\tau-1\right)  -\alpha\ln(1-\mu\tau)$ given in
\eqref{eq:met10}. Especially when $\mu$ is small (which we always allow in our
asymptotic results), the logarithmic term must be calculated accurately. For
small $x$ we have $\ln(1+x)=x+\mathcal{O}(x^{2})$, and when we first compute
$1+x$ information may get lost. For, say $-\frac12\le x\le\frac12$, we use
the relation
\begin{equation}
\label{eq:num06}\mathrm{arctanh}\,z=\tfrac12\ln\frac{1+z}{1-z} \quad
\Longrightarrow\quad\ln(1+x)=2\,\mathrm{arctanh}\,\frac{x}{2+x},
\end{equation}
and either use a power series expansion of $\mathrm{arctanh}\,z$ for, say
$-\frac13\le z\le\frac15$, or the Maple code for $\mathrm{arctanh}\,z$.

\section{Concluding remarks}

\label{sec:concl}  We derived new asymptotic expansions for positive
values of $a$, $b$ and $z$, at least one of which is large, and in this way 
we have been able to extend the results of \cite{Temme:2021:AKH},
which are only valid for large $a$ and $b$.
By proving the relation between an
integral on the positive line and a contour integral in the complex plane we
have also reduced the number of expansions from four to two. With numerical
tables we have demonstrated the performance of the expansions for a small
selection of the parameters. More extensive testing is needed to verify the
performance of the new expansions with respect of the range of the parameters
$a$ and $b$ when given a value of $z$ and the number of terms of the expansions.

\section{Appendix: Evaluating the coefficients}

\label{sec:append} In the two Sections~\ref{sec:Mbgea} and \ref{sec:Ubgea}, we
use the transformation in \eqref{eq:met03}, and the first step is to express
$t$ as a function of $s$ near $s_{0}=\mu$, the saddle point in the $s$-domain
where $\psi^{\prime}(s)=(s-\mu)/s$ vanishes. We explain the procedure
considering the transformation for Section~\ref{sec:Mbgea}, where, see
\eqref{eq:Mbga04},
\begin{equation}
\label{eq:app01}\phi(t)= t-\alpha\ln(1-t)-\mu\ln t,\quad\phi^{\prime
}(t)=-\frac{t^{2}-(\beta+1)t+\mu}{t(1-t)},
\end{equation}
with saddle point $t_{0}\in(0,1)$ given by
\begin{equation}
\label{eq:app02}t_{0}=\tfrac12(\beta+1)-\tfrac12\sqrt{(\beta+1)^{2}-4\mu
}=\frac{2\mu}{\beta+1+\sqrt{(\beta+1)^{2}-4\mu}}.
\end{equation}

To find $t$ as function of $s$ near $s_{0}$ we write the transformation in
\eqref{eq:met03} in the form of the local expansions
\begin{equation}
\sum_{k=2}^{\infty}\frac{\phi^{(k)}(t_{0})}{k!}(t-t_{0})^{k}=\sum
_{k=2}^{\infty}\frac{\psi^{(k)}(s_{0})}{k!}(s-s_{0})^{k}, \label{eq:app03}%
\end{equation}
which we can write as
\begin{equation}
(t-t_{0})\sqrt{\sum_{k=2}^{\infty}\frac{1}{k!}\phi^{(k)}(t_{0})(t-t_{0}%
)^{k-2}}=(s-s_{0})\sqrt{\sum_{k=2}^{\infty}\frac{1}{k!}\psi^{(k)}%
(s_{0})(s-s_{0})^{k-2}}, \label{eq:app04}%
\end{equation}
where the square roots are positive for positive values of $s$ and $t$. The
relation satisfies the condition imposed on the mapping in \eqref{eq:met03},
that is, $\mathrm{sign}(t-t_{0})=\mathrm{sign}(s-s_{0})$ in the present case
for $t\in(0,1)$ and $s>0$.

We substitute the expansion $\displaystyle t=t_{0}+\sum_{k=1}^{\infty}t_{k}
(s-s_{0})^{k} $ and find for the first coefficient
\begin{equation}
\label{eq:app05}t_{1}=\sqrt{\frac{\psi^{(2)}(s_{0})}{\phi^{(2)}(t_{0})}},
\end{equation}
where, again, the sign of the square root is positive to satisfy the condition
$\mathrm{sign}(s-s_{0})=\mathrm{sign}(t-t_{0})$. The other coefficients
$t_{k}$ can be found by simple computer algebra methods. The first few are
\begin{equation}
\label{eq:app06}t_{2}=\frac{\psi_{3}-\phi_{3}t_{1}^{3} }{6\phi_{2}t_{1}},
\quad t_{3}=\frac{ 5\phi_{3}^{2}t_{1}^{6} -3\phi_{2}\phi_{4}t_{1}^{6} -
4\phi_{3}\psi_{3}t_{1}^{3} + 3\phi_{2}\psi_{4}t_{1}^{2} -\psi_{3}^{2}}%
{72\phi_{2}^{2}t_{1}^{3}},
\end{equation}
where $\phi_{k}=\phi^{(k)}(t_{0})$, $\psi_{k}=\psi^{(k)}(s_{0})$, $k\ge2$.

The next step is to find the coefficients $a_{k}(\mu)$ of the expansion
$\displaystyle f(s)=\sum_{k=0}^{\infty}a_{k}(\mu)(s-s_{0})^{k}$, where in the
present example $f(s)$ is given in \eqref{eq:Mbga08}. The first coefficients
are
\begin{equation}
a_{0}(\mu)=\frac{\mu t_{1}}{t_{0}(1-t_{0})},\quad a_{1}(\mu)=\frac{2\mu
t_{0}t_{1}^{2}-2\mu t_{2}t_{0}^{2}+2\mu t_{2}t_{0}-\mu t_{1}^{2}-t_{0}%
^{2}t_{1}+t_{0}t_{1}}{t_{0}^{2}(1-t_{0})^{2}}. \label{eq:app07}%
\end{equation}

When we have enough of these coefficients, we can evaluate the coefficients
$f_{n}(\mu)$ needed in the expansions in \eqref{eq:met05} and
\eqref{eq:met06}. The first few $f_{n}(\mu)$ are
\begin{equation}
\label{eq:app08}%
\begin{array}
[c]{ll}%
f_{0}(\mu)= a_{0}(\mu),\quad f_{1}(\mu)= \mu a_{2}(\mu),\quad f_{2}(\mu)=
\mu\left(  2a_{3}(\mu)+3\mu a_{4}(\mu)\right)  , & \\[8pt]%
f_{3}(\mu)= \mu\left(  6a_{4}(\mu)+20\mu a_{5}(\mu)+15\mu^{2}a_{6}%
(\mu)\right)  , & \\[8pt]%
f_{4}(\mu)= \mu\left(  24a_{5}(\mu)+130\mu a_{6}(\mu)+210\mu^{2}a_{7}%
(\mu)+105\mu^{3}a_{8}(\mu)\right)  . &
\end{array}
\end{equation}

\begin{remark}
\label{rem:rem02} The reviewer of \cite{Temme:2021:AKH} observed: {\em It seems that the numerical coefficients are the same as the sequence A269940 in the OEIS. It would be worth investigating this in the future.}  
See also https://oeis.org/A269940. This would imply
\begin{equation}
f_{n}(\mu)=\sum_{k=1}^{n}T(n,k)\mu^{k}a_{k+n}(\mu),\quad T(n,k)=\sum_{m=0}%
^{k}(-1)^{n+m+k}\binom{n+k}{n+m}s(n+m,m), \label{eq:app09}%
\end{equation}
where $s(n,m)$ are the Stirling numbers of the first kind. 
We have proved this relation by using mathematical induction.
\end{remark}

Observe that, to avoid square roots in the formulas, we do not substitute
$t_{0}$ given in \eqref{eq:app02} and $t_{1}$ in \eqref{eq:app05}. A second
point is to reduce the number of variables in the formulas. We see that the
given scaled coefficient ${\widetilde{f}}_{1}(\mu)$ given in \eqref{eq:Mbga15}
is a function of two parameters only, namely $\mu$ and $\tau=t_{0}/\mu$. The
derivatives of $\phi(t)$ given in \eqref{eq:app01} at $t_{0}$ are functions of
$t_{0}$, $\mu$ and $\beta$, but we have used the relation $(\beta
+1)t_{0}=t_{0}^{2}+\mu$ (see the numerator of $\phi^{\prime}(t)$ in
\eqref{eq:app01}) to eliminate $\beta$ from the formulas. This way we can make
the final coefficients as simple as possible.

A final point is to have stable representations. When we would use
${\widetilde{f}}_{1}(\mu)$ given in \eqref{eq:Mbga15} with $\tau$ replaced by
$t_{0}/\mu$, a form arises that is still analytic at $\mu=0$ (a crucial value
in our asymptotics), but from a numerical point of view it becomes undefined
at $\mu=0$. The representation of ${\widetilde{f}}_{1}(\mu)$ in \eqref{eq:Mbga15}  in terms of the parameter $\tau$ is stable for small values of $\mu$, just as the higher coefficients.

\subsection{The integration by parts procedure}

\label{subsec:ip} To explain the relation between the coefficients $f_{n}%
(\mu)$ and $a_{n}(\mu)$ as shown in \eqref{eq:app08} we give a few steps in
the integration by parts procedure. We write $f(s)$ of the integral in
\eqref{eq:Mbga07} as\ $f(s)=\bigl(f(s)-f(\mu)\bigr)+f(\mu)$. Then we have
\begin{equation}
\label{eq:app10}%
\begin{array}
[c]{@{}r@{\;}c@{\;}l@{}}%
F_{\lambda}(z) & = & \displaystyle z^{-\lambda}f(\mu)- \frac1{z\Gamma
(\lambda)}\int_{0}^{\infty}\frac{f(s)-f(\mu)}{s-\mu}\,de^{-z\psi(s)}\\[8pt]
& = & \displaystyle z^{-\lambda}f(\mu)+\frac1{z\Gamma(\lambda)}\int%
_{0}^{\infty}s^{\lambda-1}e^{-zs} f_{1}(s)\,ds,
\end{array}
\end{equation}
where
\begin{equation}
\label{eq:app11}f_{1}(s)=s\frac{d}{ds}\frac{f(s)-f(\mu)}{s-\mu}.
\end{equation}
Continuing this procedure we obtain for $K=0,1,2,\ldots$
\begin{equation}
\label{eq:app12}%
\begin{array}
[c]{@{}r@{\;}c@{\;}l@{}}%
\displaystyle z^{\lambda}\,F_{\lambda}(z) & = & \displaystyle \sum_{k=0}^{K-
1}\frac{f_{k}(\mu)}{z^{k}}+ \frac1{z^{K}}E_{K}(z,\mu),\\[8pt]%
\displaystyle f_{k}(s) & = & \displaystyle s\frac d{ds}\frac{f_{k-1}%
(s)-f_{k-1}(\mu)}{s-\mu},\quad k=1,2,\ldots,\quad f_{0}(s)=f(s),\\[8pt]%
\displaystyle E_{K}(z,\mu) & = & \displaystyle \frac1{\Gamma(\lambda)}\int%
_{0}^{\infty}s^{\lambda-1} e^{-zs}f_{K}(s)\,ds.
\end{array}
\end{equation}
Eventually we obtain the complete asymptotic expansion
\begin{equation}
\label{eq:app13}F_{\lambda}(z)\sim z^{-\lambda}\sum_{n=0}^{\infty}\frac
{f_{n}(\mu)}{z^{n}}.
\end{equation}

As shown in \eqref{eq:app08}, the coefficients $f_{n}(\mu)$ can be expressed
in terms of the coefficients $a_{n}(\mu)$. To verify this we write
\begin{equation}
\label{eq:app14}f(s)=\sum_{m=0}^{\infty}a_{m}(\mu)(s-\mu)^{m}, \quad
f_{n}(s)=\sum_{m=0}^{\infty}c_{m}^{(n)}(s-\mu)^{m}, \quad c_{m}^{(0)}%
=a_{m}(\mu).
\end{equation}
We see that $f_{n}(\mu)=c_{0}^{(n)}$ and we have from \eqref{eq:app12}
\begin{equation}
\label{eq:app15}f_{n+1}(s)=\sum_{m=0}^{\infty}c_{m}^{(n+1)}(s-\mu)^{m}%
=s\sum_{m=1}^{\infty}c_{m}^{(n)}(m-1)(s-\mu)^{m-2}.
\end{equation}
This gives the recursion
\begin{equation}
\label{eq:app16}c_{m}^{(n+1)}=mc_{m+1}^{(n)}+\mu(m+1)c_{m+2}^{(n)},\quad
m,n=0,1,2,\ldots\,.
\end{equation}
All these coefficients can be expressed in terms of $a_{m}(\mu)$, and
especially $c_{0}^{(n)}=f_{n}(\mu)$. This gives the relations between
$f_{n}(\mu)$ and $a_{n}(\mu)$, the first ones being given in \eqref{eq:app08}.

The procedure described here can be applied in exactly the same way to obtain
the coefficients $p_{n}(\mu)$ of the asymptotic expansion in
\eqref{eq:Ubga10}. A limited number of coefficients are provided in this
article, but more of these are available from the authors.

\section*{Acknowledgments}
The authors thank the reviewers for carefully reading the manuscript and for constructive suggestions.\\
NMT acknowledges financial support from {\emph{Ministerio de Ciencia e
Innovaci\'on}}, project MTM2012-11686. \newline NMT thanks CWI, Amsterdam, and
the Universidad de Cantabria, Spain, for support.\newline EJMV thanks TUDelft,
Delft, for support.

\end{document}